\documentclass[12pt]{amsart}
\usepackage{amssymb}
\usepackage{graphicx}
\usepackage{hyperref}
\usepackage[all]{xy}
\usepackage{a4wide}
\usepackage{color}
\vfuzz2pt 
\hfuzz2pt 
\fontsize{12}{18}\selectfont

\newtheorem{lemma}{Lemma}[section]
\newtheorem{theorem}[lemma]{Theorem}
\newtheorem{corollary}[lemma]{Corollary}
\newtheorem{proposition}[lemma]{Proposition}
\theoremstyle{remark}
\newtheorem{remark}[lemma]{Remark}

\newcommand{\bbF}{\mathbb{F}}
\newcommand{\Fgag}{{\bar{F}}}
\def\dgag{{\bar d}}
\def\qgag{{\bar q}}
\newcommand{\wgag}{{\bar w}}
\newcommand{\Tr}{{\rm Tr}}
\def\Aut{{\rm Aut}}
\newcommand{\gal}{\textnormal{Gal}}

\begin{document}
\title[Totally ramified extensions]{On totally ramified extensions of discrete valued fields}
\author{Lior Bary-Soroker}%
\author{Elad Paran}
\address{Einstein Institute of Mathematics Edmond J. Safra Campus, Givat Ram, The Hebrew University of Jerusalem, Jerusalem, 91904, Israel}%
\email{barylior@math.huji.ac.il}%

\address{School of Mathematical Sciences, Tel Aviv University, Ramat Aviv, Tel Aviv, 69978, Israel}
\email{paranela@post.tau.ac.il}%

\thanks{The first author was partially supported by the Lady Davis fellowship trust, and the second author was partially supported by the Israel Science Foundation (Grant No. 343/07)}

\subjclass[2000]{12G10}
\keywords{Ramification, Artin-Schreier}

\date{\today}

\begin{abstract}
We give a simple characterization of the totally wild ramified valuations in a Galois extension of fields of characteristic $p$.  This criterion involves the valuations of Artin-Schreier cosets of the $\bbF_{p^r}^\times$-translation of a single element.
We apply the criterion to construct some interesting examples.
\end{abstract}

\maketitle

\section{Introduction}
Let $F/E$ be a Galois extension of fields of characteristic $p$ of degree $q$, a power of $p$.
This work gives a simple criterion that classifies the totally ramified discrete valuations  of $F/E$.

The classical case where $F/E$ is a $p$-extension, hence generated by a root of an Artin-Schreier polynomial $X^p-X - a$ with $a\in E$, is well known: a discrete valuation $v$ of $E$ totally ramifies in $F$ if and only if the maximum of the valuation in the coset $a+E^p-E$ is negative, i.e., $m_{a,v} = \max \{v(b) \mid b\in a + E^p-E\}<0$.
A standard Frattini argument reduces the general case to finitely many $p$-extensions, or in other words to a criterion with finitely many elements. More precisely, there exist $a_1, \ldots, a_n\in E$ such that $v$ totally ramifies in $F$ if and only if $m_{a_i,v}<0$ for all $i$ ($n$ being the minimal number of generators of the Frattini quotient).

The goal of this work is to simplify this criterion and show that there exists (a single) $a\in E \bbF_q$ such that $v$ totally ramifies in $F$ if and only if  $m_{\gamma a,v} < 0$, for all $\gamma\in  \bbF_q^\times$ (see Theorem~\ref{thm:main}).

We apply our criterion to construct somewhat surprising examples: Assume $\bbF_q\subseteq E$ and that $F/E$ is generated by a degree $q$ Artin-Schreier polynomial $X^q- X -a$, $a\in E$. For a discrete valuation $v$ of $E$ let $M_{a,v} = \max \{ v(b) \mid b \in a + E^q - E\}$ be the maximum of the valuation of the $q$-Artin-Schreier  coset of $a$. It is an easy exercise to show that if $M_{a,v}<0$ and $\gcd(p,M_{a,v})=1$, then $v$ totally ramifies in $F$. So one might suspect that  $M_{a,v}$ encodes the information whether $v$ totally ramifies in $F$ as in the case $q=p$.  However this is false: We construct two extensions with the same $M_{a,v}<0$. In the first example $v$ totally ramifies in $F$ although $p\mid M_{a,v}$. In the second example $v$ does not totally ramify although it does ramify in $F$.

\textbf{Notation.}
Let $F/E$ be a Galois extension of fields of characteristic $p$ of degree a power of $p$. We write $q=p^r$ for the degree $[F:E] $ of the extension.
The symbol $v$ denotes a discrete valuation of $E$, and $w$ a valuation of $F$ lying above $v$.  We denote by $\bbF_{p^r}$ the finite field with $p^r$ elements. Sometimes we identify $\bbF_{p^r}$ with its additive group. The multiplicative group of a field $K$ is denoted by $K^\times$.

For an element $a\in E$ and discrete valuation $v$ of $E$ we denote
\begin{equation}
\label{def:max}
m_{a,v} = m(a,E,v) = \max\{ v(b) \mid b\in a + E^p - E\}\\
\end{equation}
if the valuation set of the elements in the coset is bounded, and $m_{a,v}=\infty$ otherwise.

\section{Classical Theory}

Let us start this discussion by recalling the well known case $q=p$. In this case Artin-Schreier theory tells us that $F = E(\alpha)$, where $\alpha$ satisfies an equation $X^p - X = a$, for some $a\in E$. Furthermore, one can replace $\alpha$ with a solution of $X^p - X = b$, for any $b\in a + E^p - E$.
We have the following classical result (cf.\ \cite[Proposition III.7.8]{Stichtenoth1993}).

\begin{theorem}
\label{thm:p-ext}
Assume $F = E(\alpha)$, for some $\alpha \in F$ satisfying an equation $X^p-X=a$, $a\in E$. Then  the following conditions are equivalent for a discrete valuation $v$ of $E$.
\begin{enumerate}
\item $v$ totally ramifies in $F$.
\item there exists $b \in a+E^p-E$ such that $\gcd(p, v(b)) = 1$ and $v(b)<0$.
\item $m_{a,v} < 0$.
\end{enumerate}
If these conditions hold, then $v(b) = m_{a,v}$, and in particular $v(b)$ is independent of the choice of $b$. Moreover, if $\beta$ is another Artin-Schreier generator, i.e., $F=E(\beta)$, and $\beta^p-\beta = a_\beta\in E$, then $m_{a_\beta,v} = m_{a,v}$.
\end{theorem}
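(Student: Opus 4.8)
The plan is to run the classical argument, using throughout that $a$ may be replaced by any representative of its coset modulo $E^p-E$: if $b=a+c^p-c$ with $c\in E$, then $\beta:=\alpha+c$ satisfies $\beta^p-\beta=b$ and $E(\beta)=E(\alpha)=F$, so $\beta$ is again an Artin-Schreier generator. I would first prove $(b)\Rightarrow(a)$. Given $b$ as in $(b)$, write $b=\beta^p-\beta$ with $F=E(\beta)$, fix a valuation $w\mid v$, and set $e=e(w/v)$. Since $w(\beta^p-\beta)=e\,v(b)<0$ we have $w(\beta)<0$, hence $w(\beta^p)=pw(\beta)<w(\beta)$ and therefore $pw(\beta)=e\,v(b)$; as $\gcd(p,v(b))=1$ this forces $p\mid e$, and $e\le[F:E]=p$ then gives $e=p$, i.e.\ $v$ is totally ramified. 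The same identity yields $w(\beta)=v(b)$.

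Next I would deduce $(b)\Rightarrow(c)$ together with the equality $m_{a,v}=v(b)$. With $b$ as in $(b)$, any other element $b'$ of the coset satisfies $b'-b=(c'-c)^p-(c'-c)\in E^p-E$ for suitable $c,c'\in E$ (here $(x-y)^p=x^p-y^p$ in characteristic $p$); if $v(b')>v(b)$ then $v(b'-b)=v(b)<0$, so $d:=c'-c$ has $v(d)<0$ and $p\,v(d)=v(d^p-d)=v(b)$, contradicting $\gcd(p,v(b))=1$. Hence $v(b')\le v(b)$ for every $b'$ in the coset, so the maximum is attained at $b$ and $m_{a,v}=v(b)<0$; in particular this value is independent of the choice of $b$. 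For the last assertion of the theorem, if $F=E(\beta)$ with $\beta^p-\beta=a_\beta$ then Artin-Schreier theory forces the order-$p$ subgroups of $E/(E^p-E)$ generated by the classes of $a$ and of $a_\beta$ to coincide, so $a_\beta\in ja+E^p-E$ for some $j\in\bbF_p^\times$; since $j(E^p-E)=E^p-E$ and $v(j)=0$, we obtain $m_{a_\beta,v}=m_{ja,v}=m_{a,v}$.

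It remains to treat $(a)\Rightarrow(c)\Rightarrow(b)$. If $v$ totally ramifies, then no $b$ in the coset can have $v(b)\ge0$: otherwise $X^p-X-b$ would be a monic separable polynomial over $\mathcal{O}_v$ (its derivative is $-1$), and then the extension generated by a root of it would be unramified over $v$ (standard different computation), contradicting total ramification. Hence every $v(b)$ with $b$ in the coset is a negative integer and $m_{a,v}<0$, which is $(c)$. For $(c)\Rightarrow(b)$, choose $b$ with $v(b)=m_{a,v}<0$; if $p\mid v(b)$, write $v(b)=-pm$, fix a uniformizer $\pi$ of $v$, and, using that the residue field of $v$ is perfect, pick $c\in E$ with $v(c)=-m$ such that $c^p$ and $-b$ have the same coefficient of $\pi^{-pm}$, so that $v(b+c^p-c)>v(b)$, contradicting maximality; thus $\gcd(p,v(b))=1$ and $(b)$ holds. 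I expect this pole-reduction step to be the only non-formal point, and it is precisely where perfectness of the residue field enters; without it one can have $m_{a,v}<0$ while $v$ stays inert, with an inseparable residue field extension.
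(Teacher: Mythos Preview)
The paper does not supply its own proof of this theorem; it is quoted as a classical fact with a reference to Stichtenoth's book, so there is nothing to compare against at the level of argument. Your write-up is the standard proof and is correct: the computation $pw(\beta)=e\,v(b)$ for $(b)\Rightarrow(a)$, the coset argument showing the maximum is attained at any $b$ with $\gcd(p,v(b))=1$, the Hensel/different argument for $(a)\Rightarrow(c)$, and the pole-cancellation step for $(c)\Rightarrow(b)$ are exactly the ingredients one finds in Stichtenoth.

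Your final remark is worth highlighting rather than apologizing for. The implication $(c)\Rightarrow(b)$ genuinely requires the residue field of $v$ to be perfect, and the paper (like Stichtenoth) is tacitly in that setting; all the applications later in the paper are over $\bbF_q(t)$, where this holds. Without perfectness the equivalence fails: for instance with $E=\bbF_p(s)((t))$, $v$ the $t$-adic valuation, and $a=s/t^p$, one checks that every element of $a+E^p-E$ has valuation divisible by $p$, so $m_{a,v}=-p<0$ while $(b)$ fails; and indeed $v$ is not totally ramified (the residue extension is the purely inseparable $\bbF_p(s^{1/p})/\bbF_p(s)$). So your caveat is not a weakness of the proof but a necessary hypothesis that the statement, as literally written, omits.
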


We return to the case of an arbitrary $q =p^r$. Then a standard Frattini argument reduces the question of when a discrete valuation $v$ of $E$ totally ramifies in $F$ to extensions with $p$-elementary Galois group. Here a group $G$ is $p$-elementary if $G$ is abelian and of exponent $p$; equivalently $G\cong \bbF_q$. For the sake of completeness, we provide a formal proof of the reduction.

\begin{proposition}
\label{prop:red-p-elem}
There exists $\Fgag\subseteq F$ such that $\gal(\Fgag/E)$ is $p$-elementary and a discrete valuation $v$ of $E$ totally ramifies in $F$ if and only if $v$ totally ramifies in $\Fgag$.
\end{proposition}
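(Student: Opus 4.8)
The plan is to take $\Fgag = F^{\Phi(G)}$, where $G = \gal(F/E)$ (a $p$-group, since $[F:E]$ is a power of $p$) and $\Phi(G) = G^p[G,G]$ is its Frattini subgroup. Since $\Phi(G)$ is characteristic in $G$, the extension $\Fgag/E$ is Galois with $\gal(\Fgag/E) \cong G/\Phi(G)$, which is abelian of exponent $p$, hence $p$-elementary (indeed $G/\Phi(G)$ is the largest $p$-elementary quotient of $G$). So everything reduces to showing that a discrete valuation $v$ of $E$ totally ramifies in $F$ if and only if it does so in $\Fgag$, and the engine for the nontrivial direction will be the Frattini argument: \emph{a subgroup $H \le G$ with $H\Phi(G) = G$ equals $G$}, because the elements of $\Phi(G)$ are non-generators.

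For the direction ``totally ramified in $F$ implies totally ramified in $\Fgag$'' I would simply use multiplicativity of ramification indices in the tower $E \subseteq \Fgag \subseteq F$: if $w$ is the unique valuation of $F$ over $v$ with $e(w/v) = [F:E]$, then its restriction $\wgag$ to $\Fgag$ is the unique valuation of $\Fgag$ over $v$, and $[F:E] = e(w/\wgag)\,e(\wgag/v) \le [F:\Fgag]\,[\Fgag:E] = [F:E]$ forces $e(\wgag/v) = [\Fgag:E]$.

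The substance is the converse. Assuming $v$ totally ramifies in $\Fgag$, I would fix a valuation $w$ of $F$ over $v$ with restriction $\wgag$ and study the decomposition and inertia groups $D = D(w/v)$ and $I = I(w/v)$ inside $G$. The key structural input is that passing to the Galois subextension $\Fgag$ corresponds to the projection $\pi\colon G \to G/\Phi(G) = \gal(\Fgag/E)$, under which $\pi(D) = D(\wgag/v)$ and $\pi(I) = I(\wgag/v)$ (surjectivity of decomposition and inertia groups onto those of a Galois subextension). Since $v$ totally ramifies in $\Fgag$, the valuation $\wgag$ is the unique one over $v$ and $e(\wgag/v) = [\Fgag:E]$, so $D(\wgag/v) = I(\wgag/v) = \gal(\Fgag/E)$; hence $D\Phi(G) = I\Phi(G) = G$, and the Frattini argument gives $D = I = G$. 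Reading off that $w$ is the unique valuation of $F$ over $v$ and that $e(w/v) = |I| = [F:E]$ then says $v$ totally ramifies in $F$.

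The one step I would be careful about is the last equality $e(w/v) = |I|$, i.e.\ the dictionary between ``$v$ totally ramifies'' and ``$I(w/v) = G$'': this uses that the relevant residue field extension is separable, which is automatic when the residue field of $v$ is perfect — in particular in the function-field setting that motivates this paper — but in full generality should either be assumed or folded into the meaning of ``totally ramified''. Granting that, together with the (standard) compatibility of decomposition and inertia groups with passage to a subextension, I do not expect any real obstacle: the proof is essentially the Frattini argument applied twice, once to $D$ and once to $I$.
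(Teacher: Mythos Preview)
Your proposal is correct and follows essentially the same route as the paper: take $\Fgag=F^{\Phi(G)}$, use that restriction sends $I(w/v)$ onto $I(\wgag/v)$, and conclude via the Frattini property that $I(w/v)=G$ iff $I(\wgag/v)=\gal(\Fgag/E)$. The paper handles both directions at once through this equivalence and does not separately invoke the decomposition group or multiplicativity of $e$; your caveat about $e(w/v)=|I|$ requiring separability of the residue extension is a point the paper passes over silently, but it does not change the argument.
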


\begin{proof}
Prolong $v$ to a valuation $w$ of $F$.
Let $G = \gal(F/E)$, let $\Phi = \Phi(G) = G^p [G,G]$ be the Frattini subgroup of $G$, and let $\Fgag = F^\Phi$ be the fixed field of $\Phi$ in $F$. Let $\wgag$ be the restriction of $w$ to $\Fgag$.
Then $\gal(\Fgag/E) \cong G/\Phi$ is $p$-elementary.

Let $I_{w/v}$, $I_{\wgag/v}$ be the inertia groups of $w/v$, $\wgag/v$, respectively. Let  $r\colon \gal(F/E)\to \gal(\Fgag/E)$ be the restriction map. Then $r(I_{w/v})=I_{\wgag/v}$ \cite[Proposition~I.8.22]{Serre1979}. This implies that $I_{w/v} = G$ if and only if $I_{\wgag/v} = r(I_{w/v}) = \gal(\Fgag/E)$  (recall that a subgroup $H$ of a finite group $G$ satisfies $H\Phi(G) = G$ if and only if $H=G$).
\end{proof}

\begin{remark}
The Frattini subgroup is the intersection of all maximal subgroups. Therefore $\Fgag$, as its fixed field, is the compositum of all minimal sub-extensions of $F/E$.
\end{remark}

Applying Theorem~\ref{thm:p-ext} for $\Fgag$ gives the following

\begin{corollary}\label{cor:finite_set}
Let $F/E$ be a Galois extension of degree $q=p^r$. Then there exist $a_1, \leq, a_n\in E$ such that for any discrete valuation $v$ of $E$ we have $v$ totally ramifies in $F$ if and only if $m_{a_i,v} < 0$ for all $i$. 
\end{corollary}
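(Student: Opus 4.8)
The plan is to take the promised elements $a_1,\dots,a_n$ to be Artin--Schreier generators of the \emph{minimal} sub-extensions of $F/E$, using Proposition~\ref{prop:red-p-elem} to reduce to $\Fgag$ and Theorem~\ref{thm:p-ext} to convert total ramification in each such degree-$p$ piece into the condition $m_{a_i,v}<0$.

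First I would fix the field $\Fgag\subseteq F$ of Proposition~\ref{prop:red-p-elem}, so that $G:=\gal(\Fgag/E)$ is $p$-elementary (say of rank $d$) and total ramification of $v$ in $F$ is equivalent to total ramification of $v$ in $\Fgag$. Being elementary abelian, $G$ has only finitely many subgroups of index $p$ (namely $(p^d-1)/(p-1)$ of them), hence $\Fgag/E$ has finitely many minimal sub-extensions $E_1,\dots,E_n$; by the Remark these are exactly the minimal sub-extensions of $F/E$. Each $E_j/E$ is Galois of degree $p$ in characteristic $p$, so by Artin--Schreier theory $E_j=E(\alpha_j)$ with $\alpha_j^p-\alpha_j=a_j$ for some $a_j\in E$, and I would take these $a_1,\dots,a_n$ as the required elements.

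Now fix a prolongation $\wgag$ of $v$ to $\Fgag$ and let $I=I_{\wgag/v}\le G$ be its inertia group. I would establish the chain: $v$ totally ramifies in $F$ $\iff$ $v$ totally ramifies in $\Fgag$ (Proposition~\ref{prop:red-p-elem}) $\iff$ $I=G$ $\iff$ $I$ lies in no maximal subgroup of $G$ $\iff$ $I\not\subseteq\gal(\Fgag/E_j)$ for every $j$. Here the third step is the elementary group-theoretic fact already used in the proof of Proposition~\ref{prop:red-p-elem} (a subgroup of a finite group is the whole group iff it is contained in no maximal subgroup), and the last step uses that the subgroups $\gal(\Fgag/E_j)$ are precisely the maximal (equivalently, index-$p$) subgroups of the elementary abelian group $G$. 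Finally, since $\gal(E_j/E)$ has prime order $p$ and the image of $I$ under restriction $G\to\gal(E_j/E)$ is the inertia group of $\wgag|_{E_j}$ over $v$ (\cite[Proposition~I.8.22]{Serre1979}), the condition $I\not\subseteq\gal(\Fgag/E_j)$ says exactly that $v$ totally ramifies in $E_j$, which by Theorem~\ref{thm:p-ext}, (a)$\Leftrightarrow$(c), is equivalent to $m_{a_j,v}<0$. Assembling these equivalences yields that $v$ totally ramifies in $F$ if and only if $m_{a_j,v}<0$ for all $j$.

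The one point requiring care---and where a naive argument breaks---is that one must use \emph{all} minimal sub-extensions, not merely a minimal generating family of them: total ramification of $v$ in each member of a generating set of sub-extensions does not force $I=G$, since $I$ could be an index-$p$ subgroup ``diagonal'' to the chosen ones. Taking every maximal subgroup into account removes this gap, and no genuinely difficult step remains; the rest is routine assembly of Proposition~\ref{prop:red-p-elem}, Theorem~\ref{thm:p-ext}, and the compatibility of inertia groups with sub-extensions.
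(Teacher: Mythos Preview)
Your proposal is correct and is precisely the argument the paper intends: the paper's ``proof'' of the corollary is the single clause ``Applying Theorem~\ref{thm:p-ext} for $\Fgag$ gives the following,'' and your write-up is a faithful expansion of this, using Proposition~\ref{prop:red-p-elem} to pass to $\Fgag$ and then Theorem~\ref{thm:p-ext} on each minimal sub-extension. The same inertia-group argument you give (``$I=G$ iff $I$ is contained in no maximal subgroup'') is exactly what the paper spells out later in the proof of Theorem~\ref{thm:main}.

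One remark worth making: your caution that one must take \emph{all} minimal sub-extensions, not merely a generating family, is well placed and your diagonal-subgroup obstruction is genuine. This actually sits in tension with the parenthetical in the paper's introduction, which asserts that $n$ can be taken to be the minimal number of generators of the Frattini quotient; your argument gives $n=(p^d-1)/(p-1)$ instead. The corollary as stated does not specify $n$, so nothing in your proof or in the corollary is affected, but you are right that the smaller value of $n$ is not justified by this line of reasoning.
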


\section{Criterion for total ramification using one element}
In this section we strengthen Corollary~\ref{cor:finite_set} and prove that it suffices to take $\bbF_q^\times$-translation of a single element.
For this we need the following lemma.

\begin{lemma}
\label{lem:p-elementary-extensions}
Let $p$ be a prime and $q=p^r$ a power of $p$.
Consider a tower of extensions $\bbF_q\subset E \subseteq F$ with $q=[F:E]$. Assume $F=E(x)$ for some $x\in F$ that satisfies $a:=x^{q}-x \in E$.
Then the family of fields generated over $E$ by roots of $X^p - X - \gamma a$, where $\gamma$ runs over $\bbF^\times_q$ coincides with the family of all minimal sub-extensions of $F/E$.
\end{lemma}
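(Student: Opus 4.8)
The plan is to make $\gal(F/E)$ completely explicit, to write down explicit roots of the polynomials $X^p-X-\gamma a$ inside $F$, and then to match the subfields these roots generate with all maximal subgroups of $\gal(F/E)$ by means of the trace form of $\bbF_q/\bbF_p$.

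First I would record the structure of $F/E$. The roots of $X^q-X-a$ are exactly the elements $x+c$ with $c\in\bbF_q$ (using $c^q=c$), and they all lie in $F=E(x)$, so $F/E$ is the splitting field of this polynomial; the map $\sigma\mapsto\sigma(x)-x$ is then an injective homomorphism $G:=\gal(F/E)\to(\bbF_q,+)$, hence an isomorphism since $|G|=[F:E]=q$. Write $\sigma_c$ for the automorphism sending $x$ to $x+c$. Consequently, under the Galois correspondence the minimal sub-extensions of $F/E$ are precisely the fixed fields $F^{H}$ with $H$ an index-$p$ subgroup of $G$, i.e.\ an $\bbF_p$-hyperplane of $\bbF_q$.

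Next, for $\gamma\in\bbF_q^\times$ I would introduce the Frobenius-twisted element
\[
 y_\gamma:=\sum_{j=0}^{r-1}\gamma^{p^{j}}x^{p^{j}}\in F .
\]
Raising to the $p$-th power, reindexing, and using $\gamma^{p^{r}}=\gamma$ together with $x^{p^{r}}=x^{q}=x+a$, one gets by telescoping that $y_\gamma^{\,p}-y_\gamma=\gamma a$. So $y_\gamma$ is a root of $X^p-X-\gamma a$, and since the remaining roots differ from it by elements of $\bbF_p\subseteq E$, the field generated over $E$ by \emph{any} root of $X^p-X-\gamma a$ equals $E(y_\gamma)\subseteq F$. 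I would then compute the action: since $\sigma_c$ fixes $\bbF_q\subseteq E$ and $\sigma_c(x^{p^{j}})=(x+c)^{p^{j}}=x^{p^{j}}+c^{p^{j}}$,
\[
 \sigma_c(y_\gamma)=y_\gamma+\sum_{j=0}^{r-1}(\gamma c)^{p^{j}}=y_\gamma+\Tr_{\bbF_q/\bbF_p}(\gamma c).
\]
Hence the stabiliser of $y_\gamma$ in $G$ is $H_\gamma:=\{c\in\bbF_q:\Tr_{\bbF_q/\bbF_p}(\gamma c)=0\}$, which is a hyperplane because $\gamma\neq0$ makes $c\mapsto\Tr_{\bbF_q/\bbF_p}(\gamma c)$ a nonzero $\bbF_p$-functional; thus $[F^{H_\gamma}:E]=p$. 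As $H_\gamma\neq G$ we have $y_\gamma\notin E$, and therefore $E(y_\gamma)=F^{H_\gamma}$, a minimal sub-extension.

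It remains to see that every hyperplane of $\bbF_q$ is some $H_\gamma$, and this is exactly non-degeneracy of the trace form: $\gamma\mapsto(c\mapsto\Tr_{\bbF_q/\bbF_p}(\gamma c))$ is an $\bbF_p$-linear isomorphism from $\bbF_q$ onto its dual, so every functional cutting out a hyperplane equals $\Tr_{\bbF_q/\bbF_p}(\gamma\,\cdot)$ for a unique, necessarily nonzero, $\gamma$. Hence $\gamma\mapsto E(y_\gamma)$ maps $\bbF_q^\times$ onto the set of all minimal sub-extensions of $F/E$, which is the claim. (One even sees $H_\gamma=H_{\gamma'}$ iff $\gamma'/\gamma\in\bbF_p^\times$, so every minimal sub-extension is hit by exactly $p-1$ values of $\gamma$, but this refinement is not needed.) The only genuine idea is the choice of $y_\gamma$ with the twisted coefficients $\gamma^{p^{j}}$ — the naive guess $\gamma\sum_j x^{p^{j}}$ works only for $\gamma\in\bbF_p$ — and after that the argument is just the telescoping identity plus the standard linear algebra of the trace pairing, so I do not expect a serious obstacle beyond this bookkeeping.
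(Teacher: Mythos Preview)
Your proof is correct and is essentially the paper's proof: the paper sets $y=\gamma x$ and takes $z=\sum_{j=0}^{r-1}y^{p^{j}}$, which when expanded is exactly your $y_\gamma=\sum_{j}\gamma^{p^{j}}x^{p^{j}}$, and then verifies $z^p-z=\gamma a$ and computes the stabiliser in $G$ via the trace just as you do. The only cosmetic difference is that the paper phrases the enumeration of hyperplanes as ``$\bbF_q^\times$ acts transitively on hyperplanes of $\bbF_q$'' rather than your equivalent ``the trace pairing is non-degenerate''.
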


\begin{proof}
Since $X^q - X - a = \prod_{\alpha\in \bbF_q} (X- (x+\alpha))$, the extension $F/E$ is Galois. Let $G = \gal(F/E)$, then the map
\[
\phi \colon \left\{
\begin{array}{ccc} G& \to& \bbF_q\\
\sigma &\mapsto& \sigma(x) - x
\end{array}\right.
\]
is well defined. Moreover it is immediate to verify that $\phi$ is an isomorphism.

Let $C$ be the kernel of the trace map $\Tr\colon \bbF_q\to \bbF_p$; $\Tr(u) =u^{p^{r-1}} + \cdots + u$. It is well known that $T$ is a non-trivial linear transformation \cite[Theorem~VI.5.2]{Lang2002} over $\bbF_p$. This implies that $T$ is surjective, so $C$ is a hyper-space of $\bbF_q$ (as a vector space over $\bbF_p$).

The minimal sub-extensions of $F/E$ are the fixed fields of maximal subgroups of $\gal(F/E)$, which correspond to hyper-spaces of $\bbF_q$ via $\phi$.
Let $C'$ be a hyper-space in $\bbF_q$. Then there exists an automorphism $M\colon \bbF_q \to \bbF_q$ under which $M(C')  = C$. But $\Aut (\bbF_q) = \bbF_q^\times$, so $M$ acts by multiplying by some $\gamma \in \bbF_q^\times$. Hence $ \gamma C' = C$.
Vice-versa, if $\gamma\in \bbF_q^\times$, then $\gamma^{-1} C$ is a hyper-space.
Therefore, it suffices to show, for an arbitrary $\gamma\in \bbF_q^\times$, that the fixed field $F'$ of $H:=\phi^{-1}(\gamma^{-1} C)$ in $F$ is generated by a root of $X^p - X - \gamma a$.

Let $y = \gamma x$. Then $F = E(y)$ and
\[
y^q - y = \gamma^q x^q - \gamma x = \gamma(x^q-x) = \gamma a.
\]
Let $z = y^{p^{r-1}} + \cdots + y^p + y$.
Then $z \not\in E$, hence $[F:E(z)]\leq p^{r-1}$. We have
\[
z^p-z =  y^{p^{r}} + \cdots + y^{p^2} + y^p - ( y^{p^{r-1}} + \cdots + y^p + y)= y^q-y=\gamma a.
\]
Thus $[E(z):E]\leq p$, and we get $[F:E(z)]=p^{r-1}$. To complete the proof we need to show that $F'=E(z)$, so it suffices to show that $H$ fixes $z$. Indeed, let  $\sigma \in H = \phi^{-1}(\gamma^{-1} C)$. Then $\beta := \sigma(y) - y = \gamma (\sigma(x)-x) = \gamma \phi(\sigma)\in C$. We have
\begin{eqnarray*}
\sigma(z) -z
	&=&  \sigma(y^{p^{r-1}} + \cdots + y^p + y) -  (y^{p^{r-1}} + \cdots + y^p + y)\\
	&=&  (\sigma(y) - y)^{p^{r-1}} + \cdots + (\sigma(y) - y)^p + (\sigma(y) - y)\\
	&=& \beta^{p^{r-1}} + \cdots + \beta = \Tr(\beta) = 0,
\end{eqnarray*}
as needed.
\end{proof}

We are now ready for the main result that classifies totally ramified discrete valuations of Galois extensions in characteristic $p$.

\begin{theorem}
\label{thm:main}
Assume $F/E$ is a Galois extension of fields of characteristic $p$ of degree  a power of $p$ and with Galois group $G$. Let $d=d(G)$ be the minimal number of generators of $G$ and let $q = p^r$, for some $r\geq d$ (e.g., $q=[F:E]$).
Let $F' = F \bbF_q$ and $E' = E \bbF_q$. If $v$ is a valuation of $E$, we denote by $v'$ its (unique) extension to $E'$. Then there exists $a\in E'$ such that for every discrete valuation $v$ of $E$ the following is equivalent.
\begin{enumerate}
\item $v$ totally ramifies in $F$.
\label{cond:wil_ram_a}
\item $v'$ totally ramifies in $F'$.
\label{cond:wil_ram_b}
\item $m(\gamma a, E', v') < 0$, for every $\gamma \in \bbF_q^\times$.
\label{cond:wil_ram_c}
\item There exists $b_\gamma \in \gamma a + (E')^p - E'$ such that $\gcd(p,v'(b_\gamma))=1$ and $v'(b_\gamma)<0$, for every $\gamma\in \bbF_q^\times$.
\label{cond:wil_ram_d}
\end{enumerate}
\end{theorem}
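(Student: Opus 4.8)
The plan is to peel off the constant-field extension, reduce to the elementary-abelian case via the Frattini argument already used in Proposition~\ref{prop:red-p-elem}, realize the resulting extension (or a suitable over-extension) as a single degree-$q$ Artin--Schreier extension, and then apply the classical Theorem~\ref{thm:p-ext} to each of its minimal sub-extensions, which are described by Lemma~\ref{lem:p-elementary-extensions}.

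First I would prove (a)$\Leftrightarrow$(b). The extensions $E'/E$ and $F'/F$ are obtained by adjoining constants, hence are unramified, so for $w'\mid w\mid v$ with $w'$ on $F'$ one has $e(w'/w)=e(v'/v)=1$, and multiplicativity of ramification indices gives $e(w/v)=e(w'/v')$. Since restriction identifies $\gal(F'/E')$ with $\gal(F/E)=G$ and $[F':E']=[F:E]$ (this is where the hypothesis that $v$ has a \emph{unique} extension to $E'$ is used, to rule out a degenerate constant-field extension), $v$ totally ramifies in $F$ exactly when $v'$ does in $F'$; note also $d(\gal(F'/E'))=d$, so $r\ge d$ remains available over $E'$. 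Next, applying Proposition~\ref{prop:red-p-elem} to $F'/E'$ produces $\Fgag\subseteq F'$ with $\gal(\Fgag/E')$ elementary abelian of order $p^{d}$ and $v'$ totally ramified in $F'$ iff in $\Fgag$.

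It remains to produce $a$ so that the $\bbF_q^\times$-translates of $a$ detect precisely the valuations totally ramified in $\Fgag$. In the basic case $r=d$ one has $\gal(\Fgag/E')\cong\bbF_q$ and $\bbF_q\subseteq E'$, so $\bbF_q$-Artin--Schreier theory gives $a\in E'$ with $\Fgag=E'(x)$, $x^{q}-x=a$ and $[\Fgag:E']=q$; then Lemma~\ref{lem:p-elementary-extensions} says the minimal sub-extensions of $\Fgag/E'$ are the fields generated by the roots of $X^{p}-X-\gamma a$, $\gamma\in\bbF_q^\times$. Now the key observation: since $\gal(\Fgag/E')$ is abelian of exponent $p$, its inertia subgroup at $v'$ equals the whole group if and only if it is contained in no maximal subgroup, i.e.\ if and only if $v'$ is totally ramified in every minimal sub-extension of $\Fgag$. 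By Theorem~\ref{thm:p-ext} the latter is equivalent to $m(\gamma a,E',v')<0$ for all $\gamma\in\bbF_q^\times$, which is condition (c), and also to the existence of the $b_\gamma$ of condition (d). Chaining the equivalences yields the theorem when $r=d$. For $r>d$ the same scheme works after enlarging $\Fgag$: one chooses $a\in E'$ realizing a degree-$q$ Artin--Schreier extension $\widetilde F=E'(x)$, $x^q-x=a$, containing $\Fgag$, by adjoining to the Artin--Schreier data of $\Fgag$ a further $r-d$ classes having poles of order prime to $p$, and larger than all poles appearing in $\Fgag$, at each of the (finitely many) places ramified in $\Fgag$; then every minimal sub-extension of $\widetilde F$ not already coming from $\Fgag$ is totally ramified at every place where $\Fgag$ is, so $\widetilde F$ and $\Fgag$ have the same totally ramified locus, and one finishes as before.

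The main obstacle is exactly this last construction of $a$ when $r>d$: one must pad the elementary-abelian extension up to degree $q$ while keeping its totally ramified locus unchanged, and the padding requires a weak-approximation choice together with a careful local valuation computation showing that a class with a prime-to-$p$ pole dominates. The constant-field reduction in the first step also needs mild care, because of the possibility of split (rather than unique) extensions to $E'$.
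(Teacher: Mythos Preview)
Your argument is essentially the paper's own: pass to $E'=E\bbF_q$, use Proposition~\ref{prop:red-p-elem} to reduce to the elementary-abelian Frattini quotient $\Fgag$, write $\Fgag$ as a single degree-$q$ Artin--Schreier extension, invoke Lemma~\ref{lem:p-elementary-extensions} to list the minimal sub-extensions as those generated by $X^p-X-\gamma a$, and finish with Theorem~\ref{thm:p-ext}; the equivalence (c)$\Leftrightarrow$(d) and the inertia-group observation you use are exactly the paper's. You are in fact more explicit than the paper about the case $r>d$ (the paper's line ``assume that $\gal(F/E)\cong\bbF_q$'' after passing to $\Fgag$ tacitly takes $r=d$), and your padding idea is a plausible route, though as you yourself flag, it leans on finiteness and weak-approximation hypotheses that are not granted for a general $E$.
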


\begin{remark}
In the above conditions (c) and (d) it suffices that $\gamma$ runs over representatives of $\bbF_q^\times/\bbF_p^\times$.
\end{remark}

\begin{proof}
Since finite fields admit only trivial valuations, we get that both $F'/F$ and $E'/E$ are unramified, so \eqref{cond:wil_ram_a} and \eqref{cond:wil_ram_b} are equivalent.
Theorem~\ref{thm:p-ext} implies that \eqref{cond:wil_ram_c} and \eqref{cond:wil_ram_d} are equivalent. So it remains to proof that \eqref{cond:wil_ram_b} and \eqref{cond:wil_ram_c} are equivalent.
For simplicity of notation, we replace $F,E$ with $F',E'$ and assume that $\bbF_q\subseteq E$.

Let $\Fgag\subseteq F$ be the extension given in Proposition~\ref{prop:red-p-elem}. Let $\dgag$ be the minimal number of generators of $\gal(\Fgag/E)$. Then $\qgag = p^\dgag = [\Fgag:E]$ and $\dgag\leq d$. By Proposition~\ref{prop:red-p-elem} we may replace $\Fgag$ with $F$, and assume that $\gal(F/E) \cong \bbF_q$.

By Artin-Schreier Theory $F = E(x)$, where $x$ satisfies the equation $x^q-x=a$, for some $a\in E$. Lemma~\ref{lem:p-elementary-extensions} implies that all the minimal sub-extensions of $F/E$ are generated by roots of $X^p-X-\gamma a$, where $\gamma$ runs over $\bbF_q^\times$.
Note that $v$ totally ramifies in $F$ if and only if $v$ totally ramifies in all the minimal sub-extensions of $F/E$ (since if the inertia group is not the whole group, it fixes some minimal sub-extension, so $v$ does not ramify in this sub-extension).
This finishes the proof, since by Theorem~\ref{thm:p-ext} $v$ totally ramifies in all the minimal sub-extensions of $F/E$ if and only if $m(\gamma a, E, v)<0$, for all $\gamma \in \bbF_q^\times$.
\end{proof}

\section{An application}
We come back to the case where $\bbF_q \subseteq E\subseteq F$, and $F/E$ is a Galois extension with Galois group isomorphic to $\bbF_q$.
By Artin-Schreier Theory $F = E(x)$, where $x\in F$ satisfies an equation $X^q - X = a$, for some $a\in E$. This $a$ can be replaced by any element of the coset $a + E^q - E$.
If there exists $b\in a + E^q-E$ such that $v(b)< 0$ and $\gcd(q,v(b)) = 1$, then $v$ totally ramifies in $F$. It is reasonable to suspect that the converse also holds, as in the case $q=p$. We bring two interesting examples. The first is a totally ramified extension such that there exists no $b$ as above. The other construction is of an extension which is not totally ramified, although Condition~\eqref{cond:wil_ram_c} of Theorem~\ref{thm:main} holds for $\gamma = 1$.

\begin{proposition}
Let $p$ be a prime, $d\geq 1$ prime to $p$, $q=p^r$, and let $E = \bbF_q(t)$.
Consider the $t$-adic valuation,  i.e., $v(t) = 1$. Let $\gamma \neq 1$ be an element of $\bbF_q$ with norm $1$ (w.r.t.\ the extension $\bbF_q/\bbF_p$).  Consider an element
\[
a(t) = \frac{1}{t^{dp}} - \frac{\gamma}{t^d} + f(t) \in E
\]
and let $F=E(x)$, where $x$ satisfies $x^q - x = a$.
Then
\begin{enumerate}
\item If $d>1$ and $f(t) = \frac{1}{t}$, then $\gal(F/E) \cong \bbF_q$, $v$ totally ramifies in $F$, but there is no $b\in a + E^q - E$ whose valuation is prime to $p$.
\item If $f(t)  = t$, then $\max\{ v(b) \mid b\in a+ E^q-E\}<0$ but $v$ does not totally ramify in $F$.
\end{enumerate}
\end{proposition}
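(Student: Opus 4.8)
The plan is to analyze each part by working with explicit Artin--Schreier representatives and applying Theorem~\ref{thm:main} (with $\gamma$ ranging over $\bbF_q^\times/\bbF_p^\times$) together with Theorem~\ref{thm:p-ext}. The key observation is that reducing the $q$-Artin--Schreier class modulo $E^q-E$ is subtler than reducing the $p$-Artin--Schreier class modulo $E^p-E$, so I will first understand what the ``minimal sub-extension'' data looks like. By Lemma~\ref{lem:p-elementary-extensions}, the minimal sub-extensions of $F/E$ are generated by roots of $X^p-X-\delta a$ for $\delta\in\bbF_q^\times$, and $v$ totally ramifies in $F$ iff $m(\delta a, E, v)<0$ for all such $\delta$; so in both parts the real task is to compute $m(\delta a,E,v)$, i.e.\ to find, inside the $p$-Artin--Schreier coset $\delta a + E^p - E$, an element of most positive (ideally negative) $t$-adic valuation.

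For part (a): here $a = t^{-dp} - \gamma t^{-d} + t^{-1}$ with $d>1$, $d$ prime to $p$. I would first subtract the $p$-th power $t^{-dp} = (t^{-d})^p$, replacing the coset representative $a$ by $a - (t^{-d})^p + t^{-d} = -(\gamma-1)t^{-d} + t^{-d} + t^{-1}$; wait---more carefully, $a \equiv (t^{-d})^p - t^{-d} \cdot(\text{something})$, so modulo $E^p - E$ one has $a \equiv t^{-d} - \gamma t^{-d} + t^{-1} = (1-\gamma)t^{-d} + t^{-1} \pmod{E^p-E}$. Since $\gamma\neq 1$, the leading term is $(1-\gamma)t^{-d}$ with valuation $-d$, and because $\gcd(d,p)=1$ this valuation is already prime to $p$; hence $m(a,E,v) = -d < 0$. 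One must then check $m(\gamma' a, E, v)<0$ for the other classes $\gamma'$: the same reduction gives $\gamma' a \equiv (\gamma' - \gamma'^p\,\text{-adjusted})\cdots$, and the norm-$1$ hypothesis on $\gamma$ is exactly what forces a \emph{cancellation} of the $t^{-d}$ term precisely for $\gamma'=1$---but only after passing to $E^q-E$, not $E^p-E$, so for every $\gamma'\in\bbF_q^\times$ the $p$-Artin--Schreier reduction still leaves either a $t^{-dp}$-type term (handled since then $p \mid$ valuation but we reduce further) or the $t^{-d}$ term or the $t^{-1}$ term with valuation prime to $p$. I would organize this as: for each $\gamma'$, reduce modulo $E^p-E$ by killing $p$-th powers of negative-valuation monomials, and observe the resulting representative has a monomial of negative valuation prime to $p$; hence total ramification in $F$ by Theorem~\ref{thm:main}. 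Finally, to see there is no $b\in a+E^q-E$ with $v(b)$ prime to $p$: since $\gamma$ has norm $1$, $\gamma = \sigma(c)/c$-type relations let us write $\gamma t^{-d} \equiv$ something that combines with $t^{-dp}$; concretely $t^{-dp} - \gamma t^{-d}$ should lie in $(E^q - E) + (\text{lower order})$ once one uses that $\mathrm{Tr}_{\bbF_q/\bbF_p}$-type / norm-$1$ condition makes the $q$-Artin--Schreier reduction eat both leading terms, leaving only the $t^{-1}$ part---and $v(t^{-1}) = -1$, which is prime to $p$. So I need to be careful: the claim is that \emph{no} representative has valuation prime to $p$, which seems to contradict the $t^{-1}$ term. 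The resolution must be that in part (a) the surviving leading term after full $q$-reduction has valuation $-d$ (divisible by... no, $\gcd(d,p)=1$). Let me reconsider---the point of $d>1$ and $f=1/t$ is presumably that $t^{-dp} - \gamma t^{-d}$ \emph{is} in $E^q - E$ up to higher-valuation error (using norm $1$), so $a \equiv t^{-1} + (\text{valuation} \geq \text{something divisible by }p) \pmod{E^q - E}$, and one shows the best achievable valuation is a multiple of $p$; that is where the main difficulty lies and where I would concentrate the computation---proving an \emph{upper} bound $M_{a,v} \equiv 0 \pmod p$ requires showing one cannot do better, which is the genuinely hard direction.

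For part (b): here $a = t^{-dp} - \gamma t^{-d} + t$. Modulo $E^q - E$, the term $t^{-dp}-\gamma t^{-d}$ behaves as in part (a) (norm-$1$ cancellation up to higher terms), and the term $t$ has positive valuation so contributes nothing to negativity; thus the ``$q$-Artin--Schreier max'' $M_{a,v} = \max\{v(b): b\in a+E^q-E\}$ is negative (all negative-valuation content can be absorbed or has valuation $\equiv 0 \bmod p$, still $<0$), so Condition~\eqref{cond:wil_ram_c} of Theorem~\ref{thm:main} holds for $\gamma=1$ in the sense stated (i.e.\ $m(a,E,v)<0$, or $M_{a,v}<0$). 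But $v$ does not totally ramify: I would exhibit a minimal sub-extension, namely the one attached to $\gamma_0 := $ the norm-$1$ element $\gamma$ itself (or its reciprocal), in which $v$ is unramified. Concretely, by Lemma~\ref{lem:p-elementary-extensions} that sub-extension is generated by a root of $X^p - X - \delta a$ for a suitable $\delta$, and I compute $m(\delta a, E, v)$: the point is that $\delta$ can be chosen so that $\delta t^{-dp} - \delta\gamma t^{-d} = (\delta^{1/p} t^{-d})^p - \delta^{1/p}t^{-d}$ modulo lower terms exactly when $\delta\gamma = \delta^{1/p}$, i.e.\ $\delta^{1 - 1/p} = \gamma$, which is solvable for $\delta$ precisely because $\gamma$ has norm $1$ (the map $u\mapsto u^{1-1/p} = u/u^{1/p}$ has image the norm-$1$ subgroup). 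For this $\delta$, the coset $\delta a + E^p - E$ then contains $\delta t + (\text{higher})$, which has positive valuation, so $m(\delta a,E,v) \geq v(\delta t) = 1 > 0$, forcing $v$ to be unramified in that sub-extension and hence not totally ramified in $F$. I would also remark that one must double-check $\gal(F/E)\cong\bbF_q$ in part (a) (equivalently $a\notin \wp_{\bbF_q}$-degenerate position, i.e.\ $X^q - X - a$ irreducible of the right type), which follows since the coset of $a$ is not in any proper $\bbF_p$-subspace translate---guaranteed by the presence of the $t^{-dp}$ leading term of valuation $-dp$ with $\gcd(dp, \cdot)$ controlled. The main obstacle throughout is the ``upper bound'' direction in part (a): showing that \emph{no} element of $a + E^q - E$ has valuation prime to $p$ (and identifying the exact value of $M_{a,v}$), which requires a careful argument that the norm-$1$ condition precisely obstructs improving the valuation past a multiple of $p$---this is the crux and I would prove it by a descent on the valuation, at each stage using that the leading coefficient lies in the norm-$1$ subgroup iff it can be absorbed into $E^q - E$.
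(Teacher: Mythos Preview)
Your framework is right: reduce to the minimal sub-extensions via Lemma~\ref{lem:p-elementary-extensions}, compute $m(\delta a,E,v)$ for each $\delta\in\bbF_q^\times$, and use Hilbert~90 for the norm-$1$ element~$\gamma$. The clean way to do this uniformly (rather than only for $\delta=1$ as you do) is: for $\delta\in\bbF_q^\times$ take $\epsilon\in\bbF_q$ with $\epsilon^p=\delta$ and set
\[
b_\delta \;:=\; \delta a - \Big(\frac{\epsilon}{t^d}\Big)^p + \frac{\epsilon}{t^d}
\;=\; \frac{\epsilon-\delta\gamma}{t^d} + \delta f(t)\ \in\ \delta a + (E^p-E).
\]
Your identification in part~(b) of the special $\delta$ (solve $\epsilon=\delta\gamma$, possible exactly because $\gamma$ has norm~$1$) for which $b_\delta=\delta t$ has valuation $+1$ is correct and matches the paper; this gives non-total-ramification immediately. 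For part~(a) with $f(t)=1/t$, the same formula shows every $b_\delta$ has valuation $-d$ or $-1$, both negative and prime to~$p$, so Theorem~\ref{thm:main} gives total ramification; this part you essentially have.

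The genuine gap is the claim in~(a) that no $b\in a+E^q-E$ has valuation prime to~$p$ (and, analogously, $M_{a,v}<0$ in~(b)). You propose a direct ``descent on the valuation'' and call it the crux, but you do not carry it out and your own text shows you are unsure what the target value is. The paper's argument is completely different and avoids computing anything in $a+E^q-E$: from the formula above, in case~(a) one has $m(a,E,v)=-d$ while $m(\delta_0 a,E,v)=-1$ for the Hilbert-90 value $\delta_0$ (here $d>1$), so the ramification invariants of the minimal sub-extensions are \emph{not all equal}. But if some $b\in a+E^q-E$ had $v(b)<0$ prime to~$p$, then $b$ would be another $q$-Artin--Schreier generator of $F/E$, the same minimal sub-extensions would be generated by roots of $X^p-X-\delta b$, and by Theorem~\ref{thm:p-ext} each would have invariant $m(\delta b,E,v)=v(\delta b)=v(b)$, \emph{the same for all}~$\delta$; contradiction. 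The same trick gives~(b): if some $b$ in the $q$-coset had $v(b)\ge 0$, every $\delta b$ would too and every minimal sub-extension would be unramified, contradicting $v(b_1)=-d<0$. So the missing idea is to compare the $m$-invariants across the sub-extensions rather than to work inside $a+E^q-E$ directly. (A direct valuation argument does also go through---for $b=a+c^q-c$ one checks $v(b)\in\{-dp\}\cup q\mathbb{Z}_{<0}$ since $v(c^q-c)=qv(c)$ whenever $v(c)<0$ and $qv(c)=-dp$ has no integer solution---but you would need to actually write that down; the indirect argument is what the paper does.)
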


\begin{proof}
Let $\delta \in \bbF_q^\times$.
For $\epsilon\in \bbF_q$ with $\epsilon^p=\delta$
we set
\begin{equation}
\label{eq:b_delta}
b_\delta(t)= \delta a(t) - (\frac{\epsilon}{t^d})^p + \frac{\epsilon}{t^d} = \frac{\epsilon - \delta \gamma}{t^{d}} + \delta f(t).
\end{equation}
Since $\gamma\neq 1$ has norm $1$, $\gamma = \frac{\delta_0}{\delta_0^p}$, for some $\delta_0 \in \bbF_q$ (Hilbert 90).

Take $f(t)=\frac{1}{t}$. Then $v(b(t))$ is either $-d$ if $\epsilon \neq \delta \gamma $ or $-1$ if $\epsilon =\gamma \delta$, so $p\nmid v(b_\delta) < 0$.
By Theorem~\ref{thm:main}, $v$ totally ramifies in $F$.

To this end assume there exists $b\in a + E^q - E$ with $p\nmid v(b) < 0$, and let $-m = v(b)$. By Lemma~\ref{lem:p-elementary-extensions} the minimal sub-extensions of $F/E$ are generated by roots of $X^p - X -\delta b$, where $\delta \in \bbF_q^\times$.  But $v(\delta b) = v(b)$, so $-d = m(b_{\delta_0}, E, v) = m(b,E,v) = m(b_1,E,v) =-1$ (Theorem~\ref{thm:p-ext}). This contradiction implies that such $b$ does not exists, as needed for (a).

For (b) assume that $f(t)=t$, so   $v(b_{\delta_0^p} ) = v(f(t)) =1 $ by \eqref{eq:b_delta}. So $v$ is not totally ramified in $F$ (Theorem~\ref{thm:main}).
Assume there was $b\in a +E^q-E$ with $v(b)\geq 0$. Then all the minimal sub-extensions $F'$ of $F/E$ were generated by $X^p-X - \delta b$, where $\delta \in \bbF_q^\times$. But $v(\delta b) = v(b) \geq 0$, so all $F'$ are unramified (Theorem~\ref{thm:p-ext}). This conclusion contradicts the fact that the extension generated by $X^p-X-b_{\delta_0^p}$ is ramified. So $\max\{ v(b) \mid b\in a+ E^q-E\}<0$, as needed.
\end{proof}

\bibliographystyle{amsplain}

\end{document}